\makeatletter \renewenvironment{proof}[1][\proofname]{\par\pushQED{\qed}\normalfont\topsep6\p@\@plus6\p@\relax\trivlist\item[\hskip\labelsep\bfseries#1\@addpunct{.}]\ignorespaces}{\popQED\endtrivlist\@endpefalse} \makeatother
\newtheorem*{theorem}{Theorem}
\begin{document}

\title{A theorem with constructive\\and non-constructive proofs}
\author{Jaime Gaspar\thanks{INRIA Paris-Rocquencourt, $\pi r^2$, Univ Paris Diderot, Sorbonne Paris Cit\'e, F-78153 Le Chesnay, France. \texttt{mail@jaimegaspar.com}, \texttt{www.jaimegaspar.com}. Financially supported by the French Fondation Sciences Math\'ematiques de Paris.}}
\date{5 November 2012}
\maketitle
\thispagestyle{empty}

\begin{abstract}
  We present a very simple example of a theorem with constructive and non-constructive proofs: the equation $c^2 x^2 - (c^2 + c)x + c = 0$ has a solution.
\end{abstract}

A constructive (non-constructive) proof shows the existence of an object by presenting (respectively, without presenting) the object. From a logical point of view, a constructive (non-constructive) proof does not use (respectively, uses) the law of excluded middle.

The discussion of constructive versus non-constructive proofs is very common in mathematical logic and philosophy of mathematics. To illustrate this discussion, it is convenient to have some \emph{very simple} examples of theorems with both constructive and non-constructive proofs. Unfortunately, there seems to be a shortage of such examples. We present here a new example.

\begin{theorem}
  Let $c$ be an arbitrary real constant. The equation $c^2 x^2 - (c^2 + c)x + c = 0$ in $x$ has a solution.
\end{theorem}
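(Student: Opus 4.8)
The plan is to produce a \emph{witness}: a single value of $x$ that solves the equation for every $c$ at once. To find a candidate I would first factor the left-hand side. For $c \neq 0$ I can divide through by $c$, obtaining $c x^2 - (c+1)x + 1$, which factors cleanly as $(cx - 1)(x - 1)$. The factor $x - 1$ does not involve $c$ at all, which strongly suggests that $x = 1$ is a root irrespective of the value of $c$.

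I would then verify this directly, without relying on the factorisation above (which assumed $c \neq 0$). Substituting $x = 1$ into the original polynomial gives
\[
  c^2 \cdot 1^2 - (c^2 + c) \cdot 1 + c = c^2 - c^2 - c + c = 0,
\]
so $x = 1$ is a solution for \emph{every} real $c$, including $c = 0$. This yields a constructive proof: the witness $x = 1$ is exhibited explicitly and the verification uses nothing beyond arithmetic, so no appeal to the law of excluded middle is made.

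For contrast, I would also sketch the non-constructive route, which is the proof one is tempted to write first. Here one invokes the law of excluded middle to split into the cases $c = 0$ and $c \neq 0$. If $c = 0$ the equation collapses to $0 = 0$, true for any $x$; if $c \neq 0$ it is a genuine quadratic, and computing the discriminant
\[
  (c^2 + c)^2 - 4 c^2 \cdot c = c^4 - 2c^3 + c^2 = (c^2 - c)^2 \geq 0
\]
shows it is non-negative, so the quadratic formula produces a real root. Existence follows, but no single $x$ is named uniformly in $c$.

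The main obstacle is not computational, since the arithmetic in both proofs is routine. The real difficulty is to resist the reflex of case-splitting on whether $c = 0$ — a dichotomy that is not constructively decidable for an arbitrary real number, and hence the precise point where the law of excluded middle enters — and instead to notice the uniform witness $x = 1$. Once that witness is spotted the constructive proof is immediate; the non-constructive argument is essentially what one is left with if one never spots it.
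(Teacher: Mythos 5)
Your proposal is correct and matches the paper's approach: the constructive proof exhibits the uniform witness $x = 1$ and verifies it by direct substitution, exactly as the paper does, and your non-constructive sketch mirrors the paper's case split on $c = 0$ versus $c \neq 0$ via the law of excluded middle (the paper simply names $x = 1/c$ in the second case rather than invoking the discriminant and quadratic formula). Your closing observation — that the excluded middle enters precisely at the undecidable dichotomy $c = 0$ or $c \neq 0$ — is also the point the paper makes.
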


\begin{proof}[Non-constructive proof.]
  By the law of excluded middle, we have $c = 0$ or $c \neq 0$.
  \begin{itemize}
    \item Case $c = 0$: $x = 0$ (or any $x$) is a solution.
    \item Case $c \neq 0$: $x = 1/c$ is a solution.
  \end{itemize}
  (This proof is non-constructive because it does not present a solution since it does not decide between the two cases as the equality $c = 0$ is undecidable.)
\end{proof}

\begin{proof}[Constructive proof.]
  We have that $x = 1$ is a solution. (This proof is constructive because it presents a solution.)
\end{proof}

\end{document}